
\documentclass[10pt]{article}
\usepackage{amsmath,amssymb,amsthm,mathrsfs}
\usepackage[all]{xy}

\textwidth = 5.75 in
\textheight = 9 in
\oddsidemargin = 0.375 in
\evensidemargin = 0.0 in
\topmargin = 0.0 in
\headheight = 0.0 in
\headsep = 0.0 in
\parskip = 0.0in
\parindent = 0.25in

\title{Perfect IFG-formulas}
\author{ALLEN L.~MANN \\
Department of Mathematics \\
Colgate University \\
13 Oak Drive \\
Hamilton, NY 13346 \\
USA \\
E-mail: amann@mail.colgate.edu}


\let	\iso			=	\cong

\let	\given		=	|


\let\abs=\envert


\let\norm=\enVert

\newcommand{\set}[1]{\{#1\}}

\newcommand{\setof}[2]{\{\,#1 \mid #2\,\}}

\newcommand{\tuple}[1]{(#1)}
\newcommand{\seq}[1]{\langle#1\rangle}

\newcommand{\concat}{^\frown}
\let	\tuple	=	\seq

\newcommand{\restrict}{\!\upharpoonright\!}


\DeclareMathOperator{\pr}{pr}

\DeclareMathOperator{\Sub}{Sub}

\DeclareMathOperator{\Cyls}{\mathfrak{Cs}}

\DeclareMathOperator{\Suit}{Suit}
\DeclareMathOperator{\DSuit}{DSuit}


\newcommand{\powerset}{\mathscr P}

\newcommand{\A}{\mathfrak{A}}

\newcommand{\C}{\mathfrak{C}}




\newcommand{\url}[1]{\texttt{#1}} 

\newtheorem{theorem}{Theorem}[section]
\newtheorem{corollary}[theorem]{Corollary}
\newtheorem{lemma}[theorem]{Lemma}
\newtheorem{proposition}[theorem]{Proposition}

\theoremstyle{definition}
\newtheorem*{definition}{Definition}

\theoremstyle{remark}


\newcommand{\propref}[1]{Proposition~\ref{#1}}

\newcommand{\hneg}{\sim\!}
\newcommand{\hand}[1]{\land\!_{/#1}\,}
\newcommand{\hor}[1]{\lor\!\!_{/#1}\,}

\newcommand{\hforall}[2]{\forall {#1}_{/#2}}
\newcommand{\hexists}[2]{\exists {#1}_{/#2}}
\newcommand{\modelt}{\models^+}
\newcommand{\modelf}{\models^-}
\newcommand{\modeltf}{\models^\pm}
\newcommand{\modelft}{\models^\mp}

\newcommand{\trump}[1]{\norm{#1}^+}
\newcommand{\cotrump}[1]{\norm{#1}^-}

\newcommand{\abelard}{Ab\'elard}
\newcommand{\eloise}{Elo\"ise}
\newcommand{\toind}[1]{\underset{#1}{\to}}

\newcommand{\n}[1]{{#1^{\hbox{\sm\char91}}}}

\renewcommand{\cong}{\equiv}
\renewcommand{\cong}{\equiv}

\font\sm = cmsy5


\begin{document}
\maketitle
\begin{abstract}
IFG logic \cite{Dechesne:2005} is a variant of the independence-friendly logic of Hintikka and Sandu \cite{Hintikka:1989, Hintikka:1996}. We answer the question: ``Which IFG-formulas are equivalent to ordinary first-order formulas?'' We use the answer to show that the ordinary cylindric set algebra over a structure can be embedded into a reduct of the IFG-cylindric set algebra over the structure. 
\end{abstract}

\textbf{Mathematics Subject Classification (2000).} Primary: 03G25, Secondary: 03B60, 03G15. 

\textbf{Keywords.} Independence-friendly logic, cylindric algebra.




\section{Introduction}

IFG-cylindric set algebras were introduced in \cite{Mann:2008, Mann:2007} as a way to study the algebra of IFG logic. We recall the relevant definitions and theorems for the reader's convenience.

\begin{definition}
Given a first-order signature $\sigma$, an \emph{atomic IFG-formula}\index{IFG-formula!atomic|mainidx} is a pair $\tuple{\phi, X}$ where $\phi$ is an atomic first-order formula and $X$ is a finite set of variables that includes every variable that appears in $\phi$ (and possibly more).
\end{definition}

\begin{definition}
Given a first-order signature $\sigma$, the language $\mathscr L_\mathrm{IFG}^\sigma$\index{$\mathscr L_\mathrm{IFG}^\sigma$|mainidx}\index{IFG-formula|(} is the smallest set of formulas such that:
\begin{enumerate}
	\item Every atomic IFG-formula is in $\mathscr L_\mathrm{IFG}^\sigma$.
	\item If $\tuple{\phi, Y}$ is in $\mathscr L_\mathrm{IFG}^\sigma$ and \(Y \subseteq X\), then $\tuple{\phi, X}$ is in $\mathscr L_\mathrm{IFG}^\sigma$.
	\item If $\tuple{\phi, X}$ is in $\mathscr L_\mathrm{IFG}^\sigma$, then $\tuple{\hneg\phi, X}$ is in $\mathscr L_\mathrm{IFG}^\sigma$.
	\item If $\tuple{\phi, X}$ and $\tuple{\psi, X}$ are in $\mathscr L_\mathrm{IFG}^\sigma$, and \(Y \subseteq X\), then $\tuple{\phi \hor{Y} \psi, X}$ is in $\mathscr L_\mathrm{IFG}^\sigma$.
	\item If $\tuple{\phi, X}$ is in $\mathscr L_\mathrm{IFG}^\sigma$, \(x \in X\), and \(Y \subseteq X\), then $\tuple{\hexists{x}{Y}\phi, X}$ is in $\mathscr L_\mathrm{IFG}^\sigma$.
\end{enumerate}
Above $X$ and $Y$ are finite sets of variables.
\end{definition}

From now on we will make certain assumptions about IFG-formulas that will allow us to simplify our notation. First, we will assume that the set of variables of $\mathscr L_\mathrm{IFG}^\sigma$ is $\setof{v_n}{n \in \omega}$. Second, since it does not matter much which particular variables appear in a formula, we will assume that variables with smaller indices are used before variables with larger indices. More precisely, if $\tuple{\phi, X}$ is a formula, \(v_j \in X\), and \(i \leq j\), then \(v_i \in X\). By abuse of notation, if $\tuple{\phi, X}$ is a formula and \(\abs X = N\), then we will say that $\phi$ has $N$ variables and write $\phi$ for $\tuple{\phi, X}$. As a shorthand, we will call $\phi$ an IFG$_N$-formula\index{IFG$_N$-formula}. Let \(\mathscr L^\sigma_{\mathrm{IFG}_N} = \setof{\phi \in \mathscr L^\sigma_{\mathrm{IFG}}}{\phi \text{ has $N$ variables}}\)\index{$\mathscr L^\sigma_{\mathrm{IFG}_N}$}. Third, sometimes we will write $\phi \hor{J} \psi$ instead of $\phi \hor{Y} \psi$ and $\hexists{v_n}{J}\phi$ instead of $\hexists{v_n}{Y}\phi$, where \(J = \setof{j}{v_j \in Y}\). Finally, we will use \(\phi \hand{J} \psi\) to abbreviate \(\hneg(\hneg\phi \hor{J}\!\!\hneg\psi)\) and \(\hforall{v_n}{J} \phi\) to abbreviate \(\hneg\hexists{v_n}{J}(\hneg\phi)\).

\begin{definition} 
Let $\phi$ be an IFG-formula. The \emph{subformula tree} of $\phi$, denoted $\Sub(\phi)$\index{$\Sub(\phi)$ \quad subformula tree of $\phi$|mainidx}, is the smallest tree satisfying the following conditions.
\begin{enumerate}
	\item \(\tuple{\emptyset, \phi} \in \Sub(\phi)\).
	\item If \(\tuple{s, \hneg\psi} \in \Sub(\phi)\), then \(\tuple{s\concat 0, \psi} \in \Sub(\phi)\).
	\item If \(\tuple{s, \psi_1 \hor{J} \psi_2} \in \Sub(\phi)\), then \(\tuple{s\concat 1, \psi_1} \in \Sub(\phi)\) and \(\tuple{s\concat 2, \psi_2} \in \Sub(\phi)\).
	\item If \(\tuple{s, \hexists{v_n}{J}\psi} \in \Sub(\phi)\), then \(\tuple{s\concat 3, \psi} \in \Sub(\phi)\).
\end{enumerate}
For every \(\tuple{s, \psi} \in \Sub(\phi)\), \(\tuple{s, \psi} \in \Sub^+(\phi)\)\index{$\Sub(\phi)$ \quad subformula tree of $\phi$!$\Sub^+(\phi)$ \quad positive subformula tree of $\phi$|mainidx} if $s$ contains an even number of 0s, and \(\tuple{s, \psi} \in \Sub^-(\phi)\)\index{$\Sub(\phi)$ \quad subformula tree of $\phi$!$\Sub^-(\phi)$ \quad negative subformula tree of $\phi$|mainidx} if $s$ contains an odd number of 0s.
\end{definition}

From now on, we will assume that all subformulas are indexed by their position in the subformula tree. This will allow us to distinguish between multiple instances of the same formula that may occur as subformulas of $\phi$. For example, if $\phi$ is \(v_0 = v_1 \hor{v_0} v_0 = v_1\) we will distinguish between the left and right disjuncts. \index{IFG-formula|)}

Truth and falsity for IFG-formulas is defined in terms of semantic games. If $\phi$ is an IFG$_N$-formula and \(V,W \subseteq \^NA\), then \(\A \modelt \phi[V]\) iff \eloise\ has a winning strategy for the corresponding semantic game, assuming she knows the initial valuation belongs to $V$. Dually, \(\A \modelf \phi[W]\) iff \abelard\ has a winning strategy for the game, assuming he knows the initial valuation belongs to $W$. In the first case, we say that $V$ is a \emph{winning team} (or \emph{trump}) for $\phi$ in $\A$, and in the second case, we say that $W$ is a \emph{losing team} (or \emph{cotrump}) for $\phi$ in $\A$. We say that \(\A \modeltf \phi\) iff \(\A \modeltf \phi[\^NA]\). 

The purpose of the slashed subscripts in an IFG-formula is to restrict the information available to the players. For example, the IFG-formula \(\hforall{v_0}{\emptyset} \hexists{v_1}{v_0}(v_0 = v_1)\) is not true in any structure with more than one element because after \abelard\ chooses the value of $v_0$, \eloise\ is forced to choose the value of $v_1$ in ignorance of \abelard's choice. However, \(\hforall{v_0}{\emptyset} \hexists{v_1}{v_0}(v_0 = v_1)\) is not false either because there is always the possibility that \eloise\ will guess correctly.

Wilfrid Hodges made an important breakthrough when he found a way to define a Tarski-style semantics for independence-friendly logic \cite{Hodges:1997a, Hodges:1997b}.

\begin{definition} 
Two valuations \(\vec a, \vec b \in\, \^NA\) \emph{agree outside of \(J \subseteq N\)}, denoted \(\vec a \approx_J \vec b\)\index{\(\vec a \approx_J \vec b\)\quad $\vec a$ and $\vec b$ agree outside of $J$|mainidx}, if 
\[
\vec a \restrict (N\setminus J) = \vec b \restrict(N\setminus J).
\]
\end{definition}

\begin{definition} 
Let \(V \subseteq \^NA\), and let $\mathscr U$ be a cover of $V$. The cover $\mathscr U$ is called \emph{$J$-saturated}\index{cover!$J$-saturated|mainidx} if every \(U \in \mathscr U\) is closed under $\approx_J$. That is, for every \(\vec a, \vec b \in V\), if \(\vec a \approx_J \vec b\) and \(\vec a \in U \in \mathscr U\), then \(\vec b \in U\).
\end{definition}


\begin{definition} 
Define a partial operation $\bigcup_J$\index{$\bigcup_J \mathscr U$|mainidx} on collections of sets of valuations by setting \(\bigcup_J \mathscr U = \bigcup \mathscr U\) whenever $\mathscr U$ is a $J$-saturated disjoint cover of $\bigcup \mathscr U$ and letting $\bigcup_J \mathscr U$ be undefined otherwise. Thus the formula \(V = \bigcup_J \mathscr U\) asserts that $\mathscr U$ is a $J$-saturated disjoint cover of $V$. We will use the notation \(V_1 \cup_J V_2\)\index{$V_1 \cup_J V_2$|mainidx} to abbreviate \(\bigcup_J \set{V_1, V_2}\), the notation \(V_1 \cup_J V_2 \cup_J V_3\) to abbreviate \(\bigcup_J \set{V_1, V_2, V_3}\), et cetera.
\end{definition}

\begin{definition} 
A function \(f\colon V \to A\) is \emph{independent of $J$}, denoted \(f\colon V \toind{J} A\)\index{$f\colon V \toind{J} A$ \quad $f$ is independent of $J$|mainidx},  if \(f(\vec{a}) = f(\vec{b})\) whenever \(\vec{a} \approx_J \vec{b}\). 
\end{definition}

\begin{definition} 
If \(\vec a \in\, \^NA\), \(b \in A\), and \(n < N\), define $\vec a(n:b)$\index{$\vec a(n:b)$|mainidx} to be the valuation that is like $\vec a$ except that $v_n$ is assigned the value $b$ instead of $a_n$. In other words,
\[
\vec a(n:b) = \vec a\restrict(N\setminus \set{n}) \cup \set{\tuple{n,b}}.
\]
We call $\vec a(n:b)$ an \emph{$n$-variant of $\vec a$}\index{variant|mainidx}.
\end{definition}

\begin{definition} 
If \(V \subseteq\, \^NA\) is a team and \(b \in A\), define 
\[
V(n:b) = \setof{\vec a(n:b)}{\vec a \in V}.\index{$V(n:b)$|mainidx}
\]

Furthermore, if \(B \subseteq A\) define
\[
V(n:B) = \setof{\vec a(n:b)}{\vec a \in V,\ b \in B}. \index{$V(n:B)$|mainidx}
\]
A set \(V' \subseteq V(n:A)\) is called an \emph{$n$-variation of $V$} if for every \(\vec a \in V\) there is at least one $n$-variant of $\vec a$ in $V'$.
Finally if \(f\colon V \to A\), and \(V' \subseteq V\), define the \emph{$n$-variation\index{variation|mainidx} of $V'$ by $f$} to be
\[
V'(n:f) = \setof{\vec a(n:f(\vec a))}{\vec a \in V'}.\index{$V(n:f)$|mainidx}
\]
\end{definition}

\begin{theorem}[Hodges, \textit{cf.} Theorem 1.32 in \cite{Mann:2007}] 
\label{trump semantics}
\index{$\A \modeltf \phi[V]$|mainidx}
Let $\phi$ be an IFG$_N$-formula, let $\A$ be a suitable structure, and let \(V,W \subseteq \^NA\).
\begin{itemize}
	\item If $\phi$ is atomic, then  
	\begin{itemize}
		\item[(+)] \(\A \modelt \phi[V]\) if and only if for every \(\vec a \in V\), \(\A \models \phi[\vec a]\),
		\item[($-$)] \(\A \modelf \phi[W]\) if and only if for every \(\vec b \in W\), \(\A \not\models \phi[\vec b]\).	
	\end{itemize}
	\item If $\phi$ is ${\hneg\psi}$, then 
	\begin{itemize}
		\item[(+)] \(\A \modelt {\hneg\psi[V]}\) if and only if \(\A \modelf \psi[V]\),
		\item[($-$)] \(\A \modelf {\hneg\psi[W]}\) if and only if \(\A \modelt \psi[W]\).
	\end{itemize}
	\item If $\phi$ is $\psi_1 \hor{J} \psi_2$, then 
	\begin{itemize}
		\item[(+)] \(\A \modelt \psi_1 \hor{J} \psi_2[V]\) if and only if \(\A \modelt \psi_1[V_1]\) and \( \A \modelt \psi_2[V_2]\) for some \(V = V_1 \cup_J V_2\),
		\item[($-$)] \(\A \modelf \psi_1 \hor{J} \psi_2[W]\) if and only if \(\A \modelf \psi_1[W]\) and \( \A \modelf \psi_2[W]\).
	\end{itemize}
	\item If $\phi$ is $\hexists{v_n}{J}\psi$, then 
	\begin{itemize}
		\item[(+)] \(\A \modelt \hexists{v_n}{J}\psi[V]\) if and only if \(\A \modelt \psi[V(n:f)]\) for some \(f\colon V \toind{J} A\),
		\item[($-$)] \(\A \modelf \hexists{v_n}{J}\psi[W]\) if and only if \(\A \modelf \psi[W(n:A)]\).
	\end{itemize}
\end{itemize}
\end{theorem}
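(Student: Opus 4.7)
The plan is to prove both equivalences simultaneously by induction on the structure of the IFG-formula $\phi$, translating at each step the constraints imposed by the slashed subscripts in the semantic game into combinatorial conditions on teams. The semantic game for $\phi$ from a valuation $\vec a$ traces the syntax tree of $\phi$, with moves at negations (the players swap roles), disjunctions (\eloise\ picks a disjunct), and existentials (\eloise\ picks a witness), subject to the information-hiding discipline specified by each $J$. A winning strategy on the team $V$ is a single strategy that wins from every $\vec a \in V$ while respecting this discipline uniformly across $V$; the induction shows such a strategy exists exactly when the compositional clause on the right-hand side holds.

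For the base case, the atomic game has no moves: \eloise\ wins from $\vec a$ iff $\A \models \phi[\vec a]$, and one obtains a uniform winner on $V$ iff this holds pointwise, with the \abelard\ clause being dual. For negation, the game for $\hneg\psi$ from $\vec a$ is precisely the game for $\psi$ from $\vec a$ with the two players interchanged, so the inductive hypothesis transfers immediately. For disjunction $\psi_1 \hor{J} \psi_2$ in the positive direction, a winning strategy for \eloise\ must choose, for each $\vec a \in V$ and subject to $J$-independence, which disjunct to enter and then play a winning subgame there; the preimages $V_1, V_2$ of this choice are each $\approx_J$-closed, giving $V = V_1 \cup_J V_2$, and the restrictions of her strategy witness $\A \modelt \psi_i[V_i]$ by induction. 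Conversely, any $J$-saturated disjoint cover together with inductively-given strategies on the pieces glues into a valid strategy on $V$. In the negative direction, \abelard\ makes no move at the disjunction and must be prepared for whichever branch \eloise\ selects, so he needs winning strategies from $W$ on both $\psi_1$ and $\psi_2$.

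The existential case $\hexists{v_n}{J}\psi$ is handled in the same spirit: \eloise's witnessing move is a function $f\colon V \toind{J} A$, producing team $V(n:f)$ on which she continues a winning strategy for $\psi$; dually, \abelard\ has no choice at the quantifier and must be able to win from $W(n:b)$ for every $b \in A$, hence from $W(n:A)$. The main obstacle running through the induction is the gluing direction in the disjunction and existential clauses: one must verify that stitching together component strategies on the pieces of a $J$-saturated partition, or combining a $J$-independent choice function with an inductively-given strategy, produces a single strategy that itself respects the $J$-information constraint, i.e., makes identical moves on any two valuations that coincide outside $J$. The extraction direction is immediate from the fact that a strategy is by definition a function of the current valuation alone, so the required saturation or independence property falls out of the player's information restriction; once the gluing check is performed carefully for $\hor{J}$, the analogous check for $\hexists{v_n}{J}$ follows the same pattern.
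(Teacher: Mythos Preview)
The paper does not give a proof of this theorem: it is stated as a known result attributed to Hodges, with a pointer to the detailed treatment in \cite{Mann:2007}. There is therefore no paper proof to compare your attempt against.

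That said, your sketch follows the standard route for establishing the equivalence between the game-theoretic semantics and Hodges' compositional team semantics: structural induction on $\phi$, reading off at each connective how a uniform winning strategy for the relevant player decomposes into (or is assembled from) strategies on the subteams. The outline is correct in its essentials, and you have correctly located the only nontrivial work in the gluing direction for $\hor{J}$ and $\hexists{v_n}{J}$, where one must check that the assembled strategy still satisfies the $J$-uniformity constraint. One point you pass over quickly is worth making explicit in a full proof: in the $(-)$ clause for $\hexists{v_n}{J}\psi$, when extracting a winning strategy for \abelard\ on $W(n:A)$ from one on $W$, a single position $\vec c \in W(n:A)$ can arise from several pairs $(\vec a,b)$ with $\vec a \in W$ and $\vec a(n:b) = \vec c$, so one must argue (or arrange by convention on what a strategy may depend on) that \abelard's continuation at $\vec c$ does not depend on which such pair produced it. This is routine once the notion of strategy is pinned down, but it is exactly the kind of detail that a full write-up of this theorem needs to handle.
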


Recall that the universe of the $N$-dimensional cylindric set algebra over $\A$, denoted $\Cyls_{N}(\A)$, consists of the meanings of all the $N$-variable, first-order formulas expressible in the language of $\A$, where the meaning of a formula is defined by 
\[
\phi^\A = \setof{\vec a \in \^NA}{\A \models \phi[\vec a]}.
\]
Similarly, the universe of the IFG$_N$-cylindric set algebra over $\A$, denoted \(\Cyls_{\mathrm{IFG}_{N}}(\A)\), consists of the meanings of all the IFG$_N$-formulas expressible in the language of $\A$, where the meaning of an IFG$_N$-formula is given by
\[
\trump{\phi}_\A = \setof{V \subseteq \^NA}{\A \modelt \phi[V]}, 
\qquad
\cotrump{\phi}_\A = \setof{W \subseteq \^NA}{\A \modelf \phi[W]},
\]
\[
\norm{\phi}_\A = \tuple{\trump{\phi}_\A, \cotrump{\phi}_\A}.
\]

More generally, we can define IFG$_N$-cylindric set algebras  without reference to a base structure $\A$.

\begin{definition} 
An \emph{IFG-cylindric power set algebra}\index{independence-friendly cylindric power set algebra|mainidx} is an algebra whose universe is \(\powerset(\powerset(\^NA)) \times \powerset(\powerset(\^NA))\), where $A$ is a set and $N$ is a natural number. The set $A$ is called the \emph{base set}\index{base set|mainidx}, and the number $N$ is called the \emph{dimension}\index{dimension|mainidx} of the algebra. Since each  element $X$ is an ordered pair, we will use the notation $X^+$\index{$X^+$ \quad truth coordinate of $X$|mainidx} to refer to the first coordinate of the pair, and $X^-$\index{$X^-$ \quad falsity coordinate of $X$|mainidx} to refer to the second coordinate. There are a finite number of operations: 

\begin{itemize}
	\item the constant \(0 = \tuple{\set{\emptyset}, \powerset(\^NA)}\);
	\item the constant \(1 = \tuple{\powerset(\^NA), \set{\emptyset}}\);
	\item for all \(i,j < N\), the constant $D_{ij}$\index{$D_{ij}$ \quad diagonal element|mainidx} is defined by 
	\begin{itemize}
		\item[(+)] \(D_{ij}^+ = \powerset(\setof{\vec a \in\, \^NA}{a_i = a_j})\),
		\item[($-$)] \(D_{ij}^- = \powerset(\setof{\vec a \in\, \^NA}{a_i \not= a_j})\);
	\end{itemize}
	\item if \(X = \tuple{X^+,\, X^-}\), then \(\n{X}\index{$\n{X}$ \quad negation of $X$|mainidx} = \tuple{X^-, X^+}\);
	\item for every \(J \subseteq N\), the binary operation $+_J$\index{$X +_J Y$|mainidx} is defined by
	\begin{itemize}
		\item[(+)] \(V \in (X +_J Y)^+\) if and only if \(V = V_1 \cup_J V_2\) for some
				 \(V_1 \in X^+\) and \(V_2 \in Y^+\),
		\item[($-$)] \((X +_J Y)^- = X^- \cap Y^-\);
	\end{itemize}
	\item for every \(J \subseteq N\), the binary operation $\cdot_J$\index{$X \cdot_J Y$|mainidx} is defined by
	\begin{itemize}
		\item[(+)] \((X \cdot_J Y)^+ = X^+ \cap Y^+\),
		\item[($-$)] \(W \in (X \cdot_J Y)^-\) if and only if \(W = W_1 \cup_J W_2\) for some
				 \(W_1 \in X^-\) and \(W_2 \in Y^-\);
	\end{itemize}
	\item for every \(n < N\) and \(J \subseteq N\), the unary operation $C_{n,J}$\index{$C_{n,J}(X)$ \quad cylindrification of $X$|mainidx} is defined by
		\begin{itemize}
			\item[(+)] \(V \in C_{n,J}(X)^+\) if and only if \(V(n:f) \in X^+\) for some 
					\(f\colon V \toind{J} A\),
			\item[($-$)] \(W \in C_{n,J}(X)^- \) if and only if \(W(n:A) \in X^-\).
		\end{itemize}
\end{itemize}
\end{definition}

\begin{definition} 
An \emph{IFG-cylindric set algebra}\index{independence-friendly cylindric set algebra|mainidx} (or \emph{IFG-algebra}, for short) is any subalgebra of an IFG-cylindric power set algebra. An \emph{IFG$_N$-cylindric set algebra}\index{IFG$_N$-cylindric set algebra|mainidx} (or \emph{IFG$_N$-algebra}) is an IFG-cylindric set algebra of dimension $N$.
\end{definition}
\pagebreak[3]


\section{Perfect IFG-formulas}

In \cite{Hodges:1997b}, Hodges observes that ordinary first-order formulas have the property that \(\A \modelt \phi[V]\) if and only if \(\A \models \phi[\vec a]\) for every \(\vec a \in V\). Independence-friendly formulas with the same property he calls \emph{flat}. In fact, slightly more is true.

\begin{theorem}[Theorem 1.7 in \cite{Mann:2007}] 
\label{conservative extension}
Let $\phi$ be a first-order formula with $N$ variables. We can treat $\phi$ as an IFG$_N$-formula if we interpret $\neg$ as $\hneg$, $\lor$ as $\hor{\emptyset}$, and $\exists v_n$ as $\hexists{v_n}{\emptyset}$. If we do so, then for every suitable structure $\A$ and \(V,W \subseteq \^NA\), 
\begin{enumerate}
	\item  \(\A \modelt \phi[V]\) if and only if \(\A \models \phi[\vec a]\) for all \(\vec a \in V\),
	\item  \(\A \modelf \phi[W]\) if and only if \(\A \not\models \phi[\vec b]\) for all \(\vec b \in W\).
\end{enumerate}
\end{theorem}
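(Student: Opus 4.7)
The plan is to prove both parts simultaneously by induction on the complexity of the first-order formula $\phi$, appealing at each stage to the corresponding clause of \thmref{trump semantics}. The crucial observations that make every inductive step go through are that $\approx_\emptyset$ is just equality, so every cover is vacuously $\emptyset$-saturated, and every function $f\colon V \to A$ is vacuously independent of $\emptyset$; hence the slash-set restrictions disappear when $J = \emptyset$.

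For the base case, $\phi$ is atomic, and both equivalences are exactly clauses ($+$) and ($-$) for atomic formulas in \thmref{trump semantics}. For $\phi = \neg \psi$, I would write $\A \modelt \hneg\psi[V]$ iff $\A \modelf \psi[V]$ by \thmref{trump semantics}, which by the inductive hypothesis for ($-$) on $\psi$ is equivalent to $\A \not\models \psi[\vec a]$ for all $\vec a \in V$, i.e.\ $\A \models \neg\psi[\vec a]$ for all such $\vec a$; the ($-$) clause is symmetric.

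For the disjunction $\phi = \psi_1 \lor \psi_2$, the negative direction is immediate: $\A \modelf \psi_1 \hor{\emptyset} \psi_2[W]$ iff $\A \modelf \psi_i[W]$ for $i=1,2$, iff $\A \not\models \psi_i[\vec b]$ for each $\vec b \in W$ and each $i$, iff $\A \not\models \psi_1\lor\psi_2[\vec b]$. For the positive direction, one way is easy: any $\emptyset$-saturated disjoint cover $V = V_1 \cup_\emptyset V_2$ witnessing $\A\modelt\phi[V]$ gives, via the IH, $\A \models \psi_i[\vec a]$ for all $\vec a \in V_i$, hence $\A \models \psi_1\lor\psi_2[\vec a]$ for all $\vec a \in V$. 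Conversely, if every $\vec a \in V$ satisfies $\psi_1 \lor \psi_2$, put $V_1 = \setof{\vec a \in V}{\A \models \psi_1[\vec a]}$ and $V_2 = V \setminus V_1$; the points in $V_2$ must satisfy $\psi_2$, and since $\approx_\emptyset$ is equality the partition $\set{V_1,V_2}$ is automatically $\emptyset$-saturated, so the IH gives $\A \modelt \psi_i[V_i]$ and therefore $\A\modelt \psi_1 \hor{\emptyset} \psi_2[V]$. The existential case $\phi = \exists v_n \psi$ runs similarly: negatively, $\A \modelf \hexists{v_n}{\emptyset}\psi[W]$ iff $\A \modelf \psi[W(n:A)]$ iff (by IH) $\A \not\models \psi[\vec b(n:a)]$ for every $\vec b \in W$ and $a \in A$, which is precisely $\A \not\models \exists v_n \psi[\vec b]$. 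Positively, a witness function $f\colon V \toind{\emptyset} A$ is just any function $V \to A$; one direction produces such an $f$ from pointwise witnesses, the other uses $f$ to provide pointwise witnesses, in each case passing through the IH applied to $V(n:f)$.

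I expect no serious obstacle. The one step that requires a moment's care is the converse direction of the disjunction case, where the partition $V_1,V_2$ must be constructed and verified to be a valid $\emptyset$-saturated disjoint cover; but since $J = \emptyset$ this reduces to the trivial observation that any disjoint cover qualifies. All other steps are essentially mechanical translations between the ordinary Tarskian clauses and their IFG counterparts once the $\emptyset$-slashes are recognized as inert.
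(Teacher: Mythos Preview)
Your proof is correct: the induction on formula complexity via \thmref{trump semantics}, together with the observations that $\approx_\emptyset$ is the identity relation (so any disjoint cover is $\emptyset$-saturated) and that every function is independent of $\emptyset$, handles each case cleanly. Note, however, that the present paper does not actually prove this theorem; it is quoted from \cite{Mann:2007} and stated here without argument, so there is no in-paper proof to compare against. Your argument is the standard one and is precisely what one would expect the cited proof to look like.
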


Hence, for a first-order sentence, \(\A \modelt \phi\) if and only if \(\A \models \phi\), and \(\A \modelf \phi\) if and only if \(\A \not\models \phi\). Thus IFG-logic is a conservative extension of ordinary first-order logic in the sense that every ordinary first-order formula has a corresponding IFG-formula that is true and false in exactly the same models. The IFG-formulas that correspond to ordinary first-order formulas are exactly those whose independence sets are empty, making the semantic game a game of perfect information.

\begin{definition} 
An IFG-formula $\phi$ is \emph{perfect}\index{IFG-formula!perfect|mainidx} if all of its independence sets are empty. Every perfect IFG-formula is equivalent to the ordinary first-order formula obtained by omitting the empty subscripts.
\end{definition}

\begin{definition}
Given any IFG-formula $\phi$, the \emph{perfection of $\phi$}, denoted $\phi_\emptyset$\index{$\phi_\emptyset$ \quad perfection of $\phi$|mainidx}, is defined recursively as follows.
\begin{itemize}
	\item If $\phi$ is atomic, then $\phi_\emptyset$ is $\phi$.
	\item $(\hneg\psi)_\emptyset$ is $\hneg(\psi_\emptyset)$.
	\item $(\psi_1 \hor{J} \psi_2)_\emptyset$ is $(\psi_1)_\emptyset \hor{\emptyset} (\psi_2)_\emptyset$.
	\item $(\hexists{v_n}{J}\psi)_\emptyset$ is $\hexists{v_n}{\emptyset}\psi$.
\end{itemize}
Thus $\phi_\emptyset$ is just $\phi$ with all of the independence sets changed to $\emptyset$.
\end{definition}


An important feature of the perfection process is that no winning strategies are lost. 

\begin{proposition}
\label{perfection preserves strategies}
If \(\A \modeltf \phi[V]\), then \(\A \modeltf \phi_\emptyset[V]\).
\end{proposition}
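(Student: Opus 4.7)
The plan is to prove the proposition by structural induction on $\phi$, treating the $\modelt$ and $\modelf$ clauses in parallel. The driving observation is that $J = \emptyset$ imposes the \emph{weakest} possible independence condition: since $\vec a \approx_\emptyset \vec b$ iff $\vec a = \vec b$, every disjoint cover is automatically $\emptyset$-saturated, and every function $f\colon V \to A$ is automatically $\emptyset$-independent. Consequently, each inductive step can reuse, essentially verbatim, the witnesses that \thmref{trump semantics} provides for $\phi$ as witnesses for $\phi_\emptyset$.

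The atomic case is immediate since $\phi_\emptyset = \phi$, and the negation case reduces directly to the inductive hypothesis by swapping $\modelt$ and $\modelf$. For disjunction $\phi = \psi_1 \hor{J} \psi_2$: on the $\modelt$ side, a witnessing decomposition $V = V_1 \cup_J V_2$ is already an $\emptyset$-saturated disjoint cover, so applying induction to each disjunct yields $\A \modelt (\psi_1)_\emptyset \hor{\emptyset} (\psi_2)_\emptyset[V]$; the $\modelf$ clause does not mention $J$ at all, so it reduces to the inductive hypothesis directly. For $\phi = \hexists{v_n}{J}\psi$: on the $\modelt$ side, a witnessing $f\colon V \toind{J} A$ is \emph{a fortiori} $\emptyset$-independent, so the same $V(n:f)$ witnesses $\A \modelt \hexists{v_n}{\emptyset}(\psi_\emptyset)[V]$ via induction; the $\modelf$ clause is again $J$-free.

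There is essentially no obstacle --- the argument is pure bookkeeping once the ``weakest-constraint'' observation is in hand. The only mild ambiguity lies in the displayed definition of $(\hexists{v_n}{J}\psi)_\emptyset$ in the excerpt, which appears to lack a recursive application to $\psi$, but the accompanying remark ``$\phi_\emptyset$ is just $\phi$ with all of the independence sets changed to $\emptyset$'' makes the intended reading $\hexists{v_n}{\emptyset}(\psi_\emptyset)$ unambiguous; under that reading the induction closes cleanly.
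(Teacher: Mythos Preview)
Your proposal is correct and matches the paper's own proof essentially line for line: both proceed by structural induction, dispatch the atomic and negation cases trivially, and in the disjunction and existential $\modelt$ clauses reuse the given witnesses (the disjoint cover $V_1,V_2$ and the function $f$) unchanged, exploiting exactly the observation that the $\emptyset$ constraint is vacuous. Your remark about the missing recursion in the displayed definition of $(\hexists{v_n}{J}\psi)_\emptyset$ is also apt; the paper's proof itself uses $\hexists{v_n}{\emptyset}(\psi_\emptyset)$, confirming the intended reading.
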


\begin{proof}
If $\phi$ is atomic, then $\phi$ is $\phi_\emptyset$. If $\phi$ is ${\hneg\psi}$, then \(\A \modeltf {\hneg\psi[V]}\) if and only if \(\A \modelft \psi[V]\), which implies (by inductive hypothesis) \(\A \modelft \psi_\emptyset[V]\), which holds if and only if \(\A \modeltf {\hneg(\psi_\emptyset)[V]}\).

Suppose $\phi$ is \(\psi_1 \hor{J} \psi_2\). If \(\A \modelt \psi_1 \hor{J} \psi_2[V]\), there is a disjoint cover \(V = V_1 \cup V_2\) such that \(\A \modelt \psi_1[V_1]\) and \(\A \modelt \psi_2[V_2]\). By inductive hypothesis, \(\A \modelt (\psi_1)_\emptyset[V_1]\) and \(\A \modelt (\psi_2)_\emptyset[V_2]\). Hence \(\A \modelt (\psi_1)_\emptyset \hor{\emptyset} (\psi_2)_\emptyset[V]\), which is the same as \(\A \modelt (\psi_1 \hor{J} \psi_2)_\emptyset[V]\). If \(\A \modelf \psi_1 \hor{J} \psi_2[V]\), then \(\A \modelf \psi_1[V]\) and \(\A \modelf \psi_2[V]\). By inductive hypothesis, \(\A \modelf (\psi_1)_\emptyset[V]\) and \(\A \modelf (\psi_2)_\emptyset[V]\). Hence \(\A \modelf (\psi_1)_\emptyset \hor{\emptyset} (\psi_2)_\emptyset[V]\), which is the same as \(\A \modelf (\psi_1 \hor{J} \psi_2)_\emptyset[V]\).

Suppose $\phi$ is \(\hexists{v_n}{J}\psi\). If \(\A \modelt \hexists{v_n}{J}\psi[V]\), then \(\A \modelt \psi[V(n:f)]\) for some function \(f\colon V \to A\). By inductive hypothesis, \(\A \modelt \psi_\emptyset[V(n:f)]\). Hence \(\A \modelt \hexists{v_n}{\emptyset}(\psi_\emptyset)[V]\), which is the same as \(\A \modelt (\hexists{v_n}{J}\psi)_\emptyset[V]\). If \(\A \modelf \hexists{v_n}{J}\psi[V]\), then \(\A \modelf \psi[V(n:A)]\). By inductive hypothesis, \(\A \modelf \psi_\emptyset[V(n:A)]\). Hence \(\A \modelf \hexists{v_n}{\emptyset}(\psi_\emptyset)[V]\), which is the same as \(\A \modelf (\hexists{v_n}{J}\psi)_\emptyset[V]\).
\end{proof}


\section{Embedding $\Cyls_N(\A)$ into $\Cyls_{\mathrm{IFG}_N}(\A)$}

Meanings of IFG-formulas have the property that \(\trump{\phi} \cap\ \cotrump{\phi} = \set{\emptyset}\), and \(V' \subseteq V \in \norm{\phi}^\pm\) implies \(V' \in \norm{\phi}^\pm\). This fact inspires the following definitions.

\begin{definition} 
A nonempty set $X^* \subseteq\powerset(\^NA)$ is called a \emph{suit}\index{suit|mainidx} if \(V' \subseteq V \in X^*\) implies \(V' \in X^*\). A \emph{double suit}\index{double suit|mainidx} is a pair $\tuple{X^+, X^-}$ of suits such that \(X^+ \cap X^- = \set{\emptyset}\).
\end{definition}

\begin{definition} 
An IFG-algebra is \emph{suited}\index{IFG-cylindric set algebra!suited|mainidx} if all of its elements are pairs of suits. It is \emph{double-suited}\index{IFG-cylindric set algebra!double-suited|mainidx} if all of its elements are double suits.
\end{definition}


\begin{proposition}[Proposition 2.10 in \cite{Mann:2008}] 
\label{subalgebra generated suited}
The subalgebra of an IFG-algebra generated by a set of pairs of suits is a suited IFG-algebra.
\end{proposition}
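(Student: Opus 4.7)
The plan is to proceed by structural induction on the terms that build elements of the subalgebra from the generating set $S$. The base cases are the elements of $S$ (which are pairs of suits by hypothesis) together with the constants $0$, $1$, and the $D_{ij}$; in each of these, both coordinates are either a powerset or a singleton $\set{\emptyset}$, which are manifestly nonempty and downward-closed under $\subseteq$, hence suits. The inductive step requires showing that the class of pairs of suits is closed under $\n{(\cdot)}$, $+_J$, $\cdot_J$, and $C_{n,J}$.

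For negation, the operation merely swaps the two coordinates, so the conclusion is immediate. For the cylindrification $C_{n,J}(X)$, if $V' \subseteq V$ and $V \in C_{n,J}(X)^+$ via some $f\colon V \toind{J} A$, then the restriction $f\restrict V'$ is still independent of $J$ and satisfies $V'(n:f\restrict V') \subseteq V(n:f) \in X^+$, so downward closure of $X^+$ gives $V' \in C_{n,J}(X)^+$. The negative coordinate is even easier: $W' \subseteq W$ yields $W'(n:A) \subseteq W(n:A)$, and downward closure of $X^-$ finishes it. Nonemptiness in both coordinates holds because $\emptyset$ belongs to either side (using the empty function on the positive side).

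The main obstacle, and the only step requiring real care, is showing that $(X +_J Y)^+$ is a suit (the case of $(X \cdot_J Y)^-$ is dual). Given $V = V_1 \cup_J V_2$ with $V_1 \in X^+$, $V_2 \in Y^+$, and $V' \subseteq V$, the natural candidate decomposition is $V_i' = V' \cap V_i$. Downward closure of $X^+$ and $Y^+$ gives $V_1' \in X^+$ and $V_2' \in Y^+$, and disjointness of $\set{V_1', V_2'}$ is inherited from that of $\set{V_1, V_2}$. The subtle point is $J$-saturation: if $\vec a, \vec b \in V'$ with $\vec a \approx_J \vec b$ and $\vec a \in V_i'$, then since $V' \subseteq V$, both $\vec a$ and $\vec b$ lie in $V$, and $J$-saturation of the original cover forces $\vec b \in V_i$; combined with $\vec b \in V'$ this yields $\vec b \in V_i'$. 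Hence $V' = V_1' \cup_J V_2' \in (X +_J Y)^+$. The negative coordinate $(X +_J Y)^- = X^- \cap Y^-$ is a suit as the intersection of two downward-closed sets, and it contains $\emptyset$ so is nonempty.

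Assembling these inductive cases establishes that every element produced from $S$ by a term in the operations is a pair of suits, which is exactly the claim that the generated subalgebra is suited.
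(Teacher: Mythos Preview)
Your proof is correct: the structural induction showing that the constants are pairs of suits and that $\n{(\cdot)}$, $+_J$, $\cdot_J$, and $C_{n,J}$ all preserve the property of being a pair of suits is exactly what is needed, and your handling of the one delicate point---$J$-saturation of the restricted cover $\set{V_1',V_2'}$ in the $+_J$ case---is clean and correct.

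There is nothing to compare against here: the paper does not supply its own proof of this proposition but merely cites it as Proposition~2.10 of \cite{Mann:2008}. Your argument is the natural one and would serve as a self-contained proof.
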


\begin{proposition}[Proposition 2.11 in \cite{Mann:2008}] 
\label{subalgebra generated double-suited}
The subalgebra of an IFG-algebra generated by a set of double suits is a double-suited IFG-algebra. In particular, \(\Cyls_{\mathrm{IFG}_{N}}(\A)\) is a double-suited IFG$_N$-algebra.
\end{proposition}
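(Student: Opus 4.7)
The plan is to proceed by induction on the construction of the subalgebra. By \propref{subalgebra generated suited}, every element of the subalgebra is already a pair of suits, so it suffices to check that each operation of the ambient IFG-cylindric power set algebra preserves the additional condition \(X^+ \cap X^- = \set{\emptyset}\). The operations to consider are the constants \(0\), \(1\), \(D_{ij}\), the negation, the binary operations \(+_J\) and \(\cdot_J\), and the cylindrifications \(C_{n,J}\).

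The constants satisfy the condition by direct inspection of their definitions, and the negation sends a double suit \(X = \tuple{X^+, X^-}\) to \(\n X = \tuple{X^-, X^+}\), which is again a double suit. The three remaining cases all rely on downward closure of the suit coordinates supplied by \propref{subalgebra generated suited}.

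For \(+_J\): suppose \(X\) and \(Y\) are double suits and \(V \in (X +_J Y)^+ \cap (X +_J Y)^-\). From the \(+\) coordinate we obtain a \(J\)-saturated disjoint cover \(V = V_1 \cup_J V_2\) with \(V_1 \in X^+\) and \(V_2 \in Y^+\); from the \(-\) coordinate we have \(V \in X^- \cap Y^-\). Since \(V_1, V_2 \subseteq V\) and \(X^-, Y^-\) are suits, downward closure gives \(V_1 \in X^-\) and \(V_2 \in Y^-\); combined with \(V_1 \in X^+\) and \(V_2 \in Y^+\) and the double-suit hypothesis on \(X\) and \(Y\), this forces \(V_1 = V_2 = \emptyset\), hence \(V = \emptyset\). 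The \(\cdot_J\) case is symmetric (or can be deduced via negation, using \(X \cdot_J Y = \n{(\n X +_J \n Y)}\)). For \(C_{n,J}\): if \(V \in C_{n,J}(X)^+ \cap C_{n,J}(X)^-\), then \(V(n:f) \in X^+\) for some \(f\colon V \toind{J} A\), and \(V(n:A) \in X^-\); since \(V(n:f) \subseteq V(n:A)\) and \(X^-\) is downward closed, \(V(n:f) \in X^+ \cap X^- = \set{\emptyset}\), so \(V(n:f) = \emptyset\), which forces \(V = \emptyset\).

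For the ``in particular'' statement, \(\Cyls_{\mathrm{IFG}_{N}}(\A)\) is generated by the meanings of the atomic IFG$_N$-formulas, each of which is a double suit by the remarks preceding the definition of ``double suit'' (namely \(\trump{\phi} \cap \cotrump{\phi} = \set{\emptyset}\) together with downward closure of each coordinate). No single step looks technically difficult; the main point to notice is that in the \(+_J\) case one must first extract the two halves of the disjoint cover from the \(+\) side and only then exploit downward closure in the \(-\) side to collapse them, and the analogous observation for \(C_{n,J}\) uses the containment \(V(n:f) \subseteq V(n:A)\).
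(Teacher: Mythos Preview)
The paper does not supply its own proof of this proposition; it merely cites it as Proposition~2.11 of \cite{Mann:2008}. So there is nothing in the present paper to compare your argument against.

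That said, your proof is correct and is the natural one: structural induction showing that the set of double suits is closed under all the operations of the IFG-cylindric power set algebra. The key observations --- that \(V_1, V_2 \subseteq V\) lets downward closure of \(X^-, Y^-\) collapse the \(+_J\) case, and that \(V(n\!:\!f) \subseteq V(n\!:\!A)\) plays the analogous role for \(C_{n,J}\) --- are exactly right, as is the remark that \(V(n\!:\!f) = \emptyset\) forces \(V = \emptyset\). One small clarification of phrasing: when you write that downward closure is ``supplied by \propref{subalgebra generated suited},'' note that in the inductive step the downward closure of \(X^-\) and \(Y^-\) is already part of the hypothesis that \(X\) and \(Y\) are double suits; \propref{subalgebra generated suited} is what guarantees that the \emph{output} \(X +_J Y\), \(C_{n,J}(X)\), etc., is again a pair of suits. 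This is a matter of attribution only, not a gap in the argument.
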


Given a set $A$, let $\Suit_N(A)$\index{$\Suit_N(A)$ \quad suited IFG$_N$-cylindric set algebra over $A$} denote the IFG$_N$-algebra whose universe is the set of all pairs of suits in \(\powerset(\powerset(\^NA)) \times \powerset(\powerset(\^NA))\). Let $\DSuit_N(A)$\index{$\DSuit_N(A)$ \quad double-suited IFG$_N$-cylindric set algebra over $A$} denote the IFG$_N$-algebra whose universe is the set of all double suits in \(\powerset(\powerset(\^NA)) \times \powerset(\powerset(\^NA))\). Thus \(\Cyls_{\mathrm{IFG}_{N}}(\A) \subseteq \DSuit_N(A)\).



\begin{definition} 
A double suit $X$ is \emph{flat}\index{double suit!flat|mainidx} if there is a \(V \subseteq \^NA\) such that \(X^+ = \powerset(V)\).
\end{definition}

\begin{definition} 
A double suit $X$ is \emph{perfect}\index{double suit!perfect|mainidx} if there is a \(V \subseteq \^NA\) such that 
\[
X = \tuple{\powerset(V),\, \powerset(\^NA \setminus V)}.
\]
\end{definition}

\begin{proposition} 
\label{perfect double suit}
A double suit $X$ is perfect if and only if \(X +_\emptyset \n{X} = 1\) if and only if \(X \cdot_\emptyset \n{X} = 0\).
\end{proposition}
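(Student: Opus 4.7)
The plan is to unpack the definitions of $+_\emptyset$, $\cdot_\emptyset$, $0$, and $1$ in terms of the coordinates $X^\pm$, observing that because $\approx_\emptyset$ is the identity, the operation $\cup_\emptyset$ reduces to ordinary disjoint union. Concretely, I would first note that $(X +_\emptyset \n{X})^- = X^- \cap X^+ = \{\emptyset\}$ and $(X \cdot_\emptyset \n{X})^+ = X^+ \cap X^- = \{\emptyset\}$ already hold automatically by the double-suit property, so the negative coordinate of $X +_\emptyset \n{X}$ matches $1^-$ and the positive coordinate of $X \cdot_\emptyset \n{X}$ matches $0^+$ for free. The remaining content of both identities is the same condition, which I will call $(\ast)$: for every $W \subseteq \^NA$ there exist disjoint $W_1 \in X^+$, $W_2 \in X^-$ with $W = W_1 \cup W_2$. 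This simultaneously takes care of the equivalence between $X +_\emptyset \n{X} = 1$ and $X \cdot_\emptyset \n{X} = 0$.

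Next I would prove "perfect $\Rightarrow (\ast)$" by the obvious partition: if $X^+ = \powerset(V)$ and $X^- = \powerset(\^NA \setminus V)$, then any $W \subseteq \^NA$ decomposes as $W = (W \cap V) \cup (W \setminus V)$, with the two pieces disjoint and lying in $X^+$ and $X^-$ respectively.

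The main direction is "$(\ast) \Rightarrow$ perfect", and this is where the double-suit hypothesis pulls its weight. Applying $(\ast)$ to $W = \^NA$ produces a distinguished $V \in X^+$ with complement $\^NA \setminus V \in X^-$. Downward closure of the suits then yields $\powerset(V) \subseteq X^+$ and $\powerset(\^NA \setminus V) \subseteq X^-$. For the reverse inclusions, given any $U \in X^+$, the intersection $U \cap (\^NA \setminus V)$ is a subset of $\^NA \setminus V \in X^-$, hence lies in $X^-$, and is also a subset of $U \in X^+$, hence lies in $X^+$; so it belongs to $X^+ \cap X^- = \{\emptyset\}$, forcing $U \subseteq V$. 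The symmetric argument with $X^-$ and $V$ shows any $U' \in X^-$ satisfies $U' \subseteq \^NA \setminus V$, completing the proof that $X$ is perfect.

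The only subtle step is this last one: it is tempting to try to define $V$ as $\bigcup X^+$, but there is no a priori guarantee that such a union lies in $X^+$. The right move is to let $(\ast)$ hand us a maximal witness $V$ directly (via $W = \^NA$), and then use the disjointness clause $X^+ \cap X^- = \{\emptyset\}$ from the definition of double suit as the uniqueness lever that pins down $X^+$ and $X^-$ exactly on either side of $V$.
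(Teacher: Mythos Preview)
Your argument is correct and follows essentially the same route as the paper. The paper also applies the hypothesis to the full set $\^NA$ to extract the witness $V$ (and its complement) and then invokes the double-suit property to conclude $X = \tuple{\powerset(V),\,\powerset(\^NA\setminus V)}$; you simply spell out that last step in more detail, and your introduction of the intermediate condition $(\ast)$ neatly handles the $\cdot_\emptyset$ equivalence, which the paper leaves implicit by duality.
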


\begin{proof}
Suppose \(X = \tuple{\powerset(V), \powerset(\^NA \setminus V)}\). Then \(\^NA = V \cup_\emptyset (\^NA \setminus V)\), where \(V \in X^+\) and \(\^NA \setminus V \in (\n{X})^+\). Hence \(\^NA \in (X +_\emptyset \n{X})^+\). Since \(X +_\emptyset \n{X}\) is a double suit we have \(X +_\emptyset \n{X} = \tuple{\powerset(\^NA), \set{\emptyset}} = 1\). 
Conversely, suppose \(X +_\emptyset \n{X} = 1\). Then there exist \(V \in X^+\) and \(V' \in X^-\) such that \(\^NA = V \cup_\emptyset V'\). But then \(V' = \^NA \setminus V\). Since $X$ is a double suit, \(X = \tuple{\powerset(V), \powerset(\^NA \setminus V)}\).
\end{proof}

Since IFG logic is a conservative extension of ordinary first-order logic, we should expect the ordinary cylindric set algebra $\Cyls_N(\A)$\index{$\Cyls_N(\A)$ \quad $N$-dimensional cylindric set algebra over $\A$} to be embeddable into some reduct of $\Cyls_{\mathrm{IFG}_{N}}(\A)$.

\begin{definition} 
The reduct of an IFG$_N$-algebra to the signature $\seq{0, 1, D_{ij}, \n{ }, +_\emptyset, \cdot_\emptyset, C_{n,\emptyset}}$ is called the \emph{$\emptyset$-reduct}\index{IFG-cylindric set algebra!$\emptyset$-reduct|mainidx} of the algebra. A subalgebra of the $\emptyset$-reduct is called a \emph{perfect subalgebra}\index{perfect subalgebra|mainidx} if all of its members are perfect. The subalgebra of the $\emptyset$-reduct of $\Cyls_{\mathrm{IFG}_{N}}(\A)$ generated by the meanings of atomic formulas is denoted $\Cyls_{\mathrm{IFG}_{N, \emptyset}}(\A)$\index{$\Cyls_{\mathrm{IFG}_{N, \emptyset}}(\A)$|mainidx}.
\end{definition}

It is worth noting that $\Cyls_{\mathrm{IFG}_{N}}(\A)$ is generated by the set of its perfect elements because it is generated by the meanings of atomic formulas, which are all perfect.

\begin{lemma} 
Suppose \(X = \tuple{\powerset(V),\, \powerset(\^NA \setminus V)}\) and \(Y = \tuple{\powerset(W),\, \powerset(\^NA \setminus W)}\). Then 
\begin{enumerate}
	\item \(X +_\emptyset Y = \tuple{\powerset(V \cup W),\, \powerset(\^NA \setminus (V \cup W))}\), 
	\item \(C_{n,\emptyset}(X) = \tuple{\powerset(V(n:A)),\, \powerset(\^NA \setminus V(n:A))}\).
\end{enumerate}
\end{lemma}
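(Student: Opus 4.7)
The plan is to unfold the definitions of $+_\emptyset$ and $C_{n,\emptyset}$ directly, exploiting two simplifications that make the $\emptyset$-reduct behave classically: first, since $\vec a \approx_\emptyset \vec b$ iff $\vec a = \vec b$, every cover is $\emptyset$-saturated, so $V_1 \cup_\emptyset V_2$ is just an ordinary disjoint union; second, the constraint $f \colon V \toind{\emptyset} A$ is vacuous, so $f$ may be any function $V \to A$.

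For part (a), the negative coordinate is immediate: $(X +_\emptyset Y)^- = X^- \cap Y^- = \powerset(\^NA \setminus V) \cap \powerset(\^NA \setminus W) = \powerset\!\bigl(\^NA \setminus (V \cup W)\bigr)$. For the positive coordinate I would show both inclusions of $(X +_\emptyset Y)^+ = \powerset(V \cup W)$. If $U = U_1 \cup_\emptyset U_2$ with $U_1 \subseteq V$ and $U_2 \subseteq W$, then $U \subseteq V \cup W$; conversely, for $U \subseteq V \cup W$, the decomposition $U_1 := U \cap V$ and $U_2 := U \setminus V \subseteq W$ is a disjoint cover, hence witnesses $U \in (X +_\emptyset Y)^+$.

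For part (b), the positive coordinate unfolds to: $U \in C_{n,\emptyset}(X)^+$ iff there exists $f \colon U \to A$ with $U(n:f) \subseteq V$. I would show this is equivalent to $U \subseteq V(n:A)$. Given such an $f$, each $\vec a \in U$ satisfies $\vec a(n:f(\vec a)) \in V$, and since $\vec a$ is an $n$-variant of $\vec a(n:f(\vec a))$, we get $\vec a \in V(n:A)$. Conversely, if $U \subseteq V(n:A)$, then for each $\vec a \in U$ there is some $b_{\vec a} \in A$ with $\vec a(n:b_{\vec a}) \in V$; setting $f(\vec a) := b_{\vec a}$ gives $U(n:f) \subseteq V$. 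The negative coordinate is easier: $U \in C_{n,\emptyset}(X)^-$ iff $U(n:A) \subseteq \^NA \setminus V$, and both this and $U \subseteq \^NA \setminus V(n:A)$ unfold to the single condition ``$\vec a(n:b) \notin V$ for every $\vec a \in U$ and every $b \in A$.''

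The only step with any real content is the forward direction of the positive coordinate in (b), where one must construct the witnessing function $f$. Even this requires no appeal to choice in any essential way, since we are free to let $f(\vec a)$ be any element of the nonempty set $\{b \in A : \vec a(n:b) \in V\}$ provided by the hypothesis $\vec a \in V(n:A)$. Everything else is bookkeeping with powersets.
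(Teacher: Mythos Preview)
Your proof is correct and follows essentially the same route as the paper: the same disjoint decomposition $U_1 = U \cap V$, $U_2 = U \setminus V$ in part~(a), and the same construction of the witnessing function in part~(b) (the paper phrases it as choosing $\vec b \in V$ and then projecting onto the $n$th coordinate, you choose the value $b_{\vec a}$ directly, but these are the same idea). One small slip: in your final paragraph you call the step that constructs $f$ the ``forward direction,'' but in your own earlier exposition that is the \emph{converse}; the mathematics is unaffected.
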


\begin{proof}
(a)
Suppose \(U \in (X +_\emptyset Y)^+\). Then \(U = U_1 \cup_\emptyset U_2\) for some \(U_1 \in \powerset(V)\) and \(U_2 \in \powerset(W)\). Hence \(U \in \powerset(V \cup W)\). Conversely, suppose \(U \in \powerset(V \cup W)\). Let \(U_1 = U \cap V\) and \(U_2 = U \setminus V\). Then \(U = U_1 \cup_\emptyset U_2\) where \(U_1 \in \powerset(V)\) and \(U_2 \in \powerset(W)\). Hence \(U \in (X +_\emptyset Y)^+\). Also observe 
\begin{align*}
(X +_\emptyset Y)^- 
&= \powerset(\^NA \setminus V) \cap \powerset(\^NA \setminus W) \\
&= \powerset((\^NA \setminus V) \cap (\^NA \setminus W)) \\
&= \powerset(\^NA \setminus (V \cup W)).
\end{align*}

(b)
Suppose \(U \in C_{n,\emptyset}(X)^+\). Then \(U(n:f) \in \powerset(V)\) for some \(f\colon U \toind{\emptyset} A\), so \(U \subseteq V(n:A)\). Hence \(U \in \powerset(V(n:A))\). Conversely, suppose \(U \in \powerset(V(n:A))\). Then \(U \subseteq V(n:A)\), which means that for every \(\vec a \in U\) there is a \(\vec b \in V\) such that \(\vec a = \vec b(n: a_n)\). Let \(f\colon U \to V\) be a function that chooses one such $\vec b$ for every $\vec a$. Then \(\pr_n \circ f\colon U \toind{\emptyset} A\) and \(U(n: \pr_n\circ f) \in \powerset(V)\). Hence \(U \in C_{n,\emptyset}(X)^+\). Also observe that \(U \in C_{n,\emptyset}(X)^-\) 
if and only if \(U(n:A) \in \powerset(\^NA \setminus V)\) 
if and only if \(U(n:A) \subseteq \^NA \setminus V\) 
if and only if \(U \subseteq \^NA \setminus V(n:A)\) 
if and only if \(U \in \powerset(\^NA \setminus V(n:A))\).
\end{proof}

\begin{proposition} 
\label{subalgebra generated perfect}
Let $\C$ be the $\emptyset$-reduct of an IFG$_N$-algebra. Every subalgebra of $\C$ generated by a set of perfect elements is perfect.
\end{proposition}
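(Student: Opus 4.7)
The plan is a straightforward structural induction: show that every generator (namely each perfect element from the generating set, plus the constants $0$, $1$, $D_{ij}$) is perfect, and that each operation in the $\emptyset$-reduct signature $\seq{0, 1, D_{ij}, \n{}, +_\emptyset, \cdot_\emptyset, C_{n,\emptyset}}$ sends perfect double suits to perfect double suits. Then every element of the generated subalgebra is obtained from perfect elements by finitely many applications of these operations, hence is perfect.

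First I would check the constants: $0 = \tuple{\set{\emptyset}, \powerset(\^NA)}$ is perfect witnessed by $V = \emptyset$; $1 = \tuple{\powerset(\^NA), \set{\emptyset}}$ is perfect witnessed by $V = \^NA$; and $D_{ij}$ is perfect witnessed by $V = \setof{\vec a}{a_i = a_j}$. The operation $\n{}$ clearly preserves perfection: if $X = \tuple{\powerset(V), \powerset(\^NA \setminus V)}$, then $\n{X} = \tuple{\powerset(\^NA \setminus V), \powerset(V)}$ is perfect with witness $\^NA \setminus V$. For $+_\emptyset$ and $C_{n,\emptyset}$ I would invoke the preceding lemma directly: it shows that perfect inputs produce perfect outputs with witnesses $V \cup W$ and $V(n:A)$ respectively.

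The only operation not explicitly handled by the lemma is $\cdot_\emptyset$. For this I would give the parallel computation: if $X$ and $Y$ are perfect as above, then $(X \cdot_\emptyset Y)^+ = X^+ \cap Y^+ = \powerset(V) \cap \powerset(W) = \powerset(V \cap W)$, while for the negative coordinate one argues as in part (a) of the lemma with the roles of $+_\emptyset$ and $\cdot_\emptyset$ interchanged: any $U \in (X \cdot_\emptyset Y)^-$ decomposes as $U = U_1 \cup_\emptyset U_2$ with $U_1 \subseteq \^NA \setminus V$ and $U_2 \subseteq \^NA \setminus W$, so $U \subseteq (\^NA \setminus V) \cup (\^NA \setminus W) = \^NA \setminus (V \cap W)$; conversely, any subset of $\^NA \setminus (V \cap W)$ splits as $U_1 = U \cap (\^NA \setminus V)$ and $U_2 = U \setminus U_1 \subseteq \^NA \setminus W$. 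Thus $X \cdot_\emptyset Y$ is perfect with witness $V \cap W$.

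There is no real obstacle here; the argument is a routine induction on the term complexity of elements in the generated subalgebra, and the only mildly nontrivial step is the bookkeeping for $\cdot_\emptyset$, which is essentially dual to the $+_\emptyset$ case already handled in the lemma.
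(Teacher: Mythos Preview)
Your proposal is correct and follows the same structural-induction approach as the paper: verify the constants are perfect, check that $\n{\ }$ preserves perfection, and invoke the preceding lemma for $+_\emptyset$ and $C_{n,\emptyset}$. In fact you are slightly more thorough than the paper, which omits any mention of $\cdot_\emptyset$; your direct computation for that case is fine, though one could also simply note that $X \cdot_\emptyset Y = \n{(\n{X} +_\emptyset \n{Y})}$, so closure under $\cdot_\emptyset$ follows from the cases already handled.
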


\begin{proof}
The constants $0$, $1$, and $D_{ij}$ are all perfect. If \(X = \tuple{\powerset(V),\, \powerset(\^NA \setminus V)}\) is perfect, then so is \(\n{X} = \tuple{\powerset(\^NA \setminus V),\, \powerset(V)}\). By the previous lemma, if $X$ and $Y$ are perfect so are $X +_\emptyset Y$ and $C_{n,\emptyset}(X)$.
\end{proof}

\begin{corollary} 
\label{perfect subalgebra}
\(\Cyls_{\mathrm{IFG}_{N, \emptyset}}(\A)\) is perfect.
\end{corollary}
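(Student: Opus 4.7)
The plan is to apply Proposition~\ref{subalgebra generated perfect} directly, so the only real work is to check that the generators of $\Cyls_{\mathrm{IFG}_{N,\emptyset}}(\A)$ — namely, the meanings of atomic IFG$_N$-formulas — are perfect double suits.

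First I would unpack the meaning $\norm{\phi}_\A = \tuple{\trump{\phi}_\A, \cotrump{\phi}_\A}$ of an atomic formula $\phi$ using the atomic clause of \thmref{trump semantics}. Setting $V_\phi = \setof{\vec a \in \^NA}{\A \models \phi[\vec a]}$, the positive clause gives $\A \modelt \phi[V]$ iff $V \subseteq V_\phi$, so $\trump{\phi}_\A = \powerset(V_\phi)$. Dually, the negative clause gives $\A \modelf \phi[W]$ iff $W \cap V_\phi = \emptyset$, so $\cotrump{\phi}_\A = \powerset(\^NA \setminus V_\phi)$. Hence $\norm{\phi}_\A = \tuple{\powerset(V_\phi),\, \powerset(\^NA \setminus V_\phi)}$, which matches the definition of a perfect double suit exactly.

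Once this observation is in place, the corollary follows immediately: by definition, $\Cyls_{\mathrm{IFG}_{N,\emptyset}}(\A)$ is the subalgebra of the $\emptyset$-reduct of $\Cyls_{\mathrm{IFG}_N}(\A)$ generated by the meanings of atomic formulas, and we have just shown that generating set consists of perfect elements. Applying \propref{subalgebra generated perfect} concludes the proof.

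There is no substantial obstacle here; the content of the corollary is really just bookkeeping — combining the atomic clause of the trump/cotrump semantics with the inductive closure result of \propref{subalgebra generated perfect}. If anything requires a careful word, it is noting why the reduct in \propref{subalgebra generated perfect} is exactly the one used to define $\Cyls_{\mathrm{IFG}_{N,\emptyset}}(\A)$, so that the proposition applies verbatim.
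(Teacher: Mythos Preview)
Your proposal is correct and follows exactly the paper's approach: invoke \propref{subalgebra generated perfect} after noting that the generating set (meanings of atomic formulas) consists of perfect elements. The paper's proof is just the one-line assertion ``generated by the meanings of atomic formulas, which are all perfect,'' so your unpacking of $\norm{\phi}_\A = \tuple{\powerset(V_\phi),\,\powerset(\^NA\setminus V_\phi)}$ via the atomic clause of \thmref{trump semantics} simply makes explicit what the paper takes for granted.
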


\begin{proof}
\(\Cyls_{\mathrm{IFG}_{N, \emptyset}}(\A)\) is generated by the meanings of atomic formulas, which are all perfect.
\end{proof}

It follows that if $\phi$ is a perfect IFG-formula, then $\norm{\phi}$ is perfect. It is conceivable that $\Cyls_{\mathrm{IFG}_N}(\A)$ includes perfect elements that cannot be generated by $\emptyset$-operations from the meanings of atomic formulas. The next proposition shows that this is in fact not the case.

\begin{proposition} 
\label{no rogue perfect elements}
Every perfect element in $\Cyls_{\mathrm{IFG}_{N}}(\A)$ belongs to $\Cyls_{\mathrm{IFG}_{N, \emptyset}}(\A)$.
\end{proposition}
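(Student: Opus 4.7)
The plan is to fix a perfect element $X \in \Cyls_{\mathrm{IFG}_N}(\A)$, represent it as $\norm{\phi}$ for some IFG$_N$-formula $\phi$ with $X = \tuple{\powerset(V),\, \powerset(\^NA \setminus V)}$, and show that $X$ coincides with $\norm{\phi_\emptyset}$, which visibly lives in $\Cyls_{\mathrm{IFG}_{N,\emptyset}}(\A)$. Everything bootstraps off \propref{perfection preserves strategies} together with \propref{subalgebra generated perfect}.

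First I would verify that $\norm{\phi_\emptyset}$ lies in $\Cyls_{\mathrm{IFG}_{N,\emptyset}}(\A)$ by a direct induction on $\phi$. Since $\phi_\emptyset$ is constructed from atomic subformulas using only $\hneg$, $\hor{\emptyset}$, and $\hexists{v_n}{\emptyset}$, \thmref{trump semantics} implies that $\norm{\phi_\emptyset}$ is built from the meanings of those atomic subformulas using only the operations $\n{\,}$, $+_\emptyset$, and $C_{n,\emptyset}$ of the $\emptyset$-reduct; these atomic meanings are exactly the generators of $\Cyls_{\mathrm{IFG}_{N,\emptyset}}(\A)$. By \propref{subalgebra generated perfect}, every element so constructed is perfect, so $\norm{\phi_\emptyset} = \tuple{\powerset(V'),\, \powerset(\^NA \setminus V')}$ for some $V' \subseteq \^NA$.

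Next I would pin down $V'$ using \propref{perfection preserves strategies}. Because $\A \modelt \phi[V]$ (as $V \in \powerset(V) = \trump{\phi}_\A$), the proposition yields $V \in \trump{\phi_\emptyset}_\A = \powerset(V')$, whence $V \subseteq V'$. Dually, $\A \modelf \phi[\^NA \setminus V]$, so $\^NA \setminus V \in \cotrump{\phi_\emptyset}_\A = \powerset(\^NA \setminus V')$, giving $V' \subseteq V$. Therefore $V' = V$ and $\norm{\phi} = \norm{\phi_\emptyset} \in \Cyls_{\mathrm{IFG}_{N,\emptyset}}(\A)$, which finishes the proof.

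The only step that requires any care is the first one: confirming that the recursion defining $\phi_\emptyset$ translates, via \thmref{trump semantics}, into the corresponding $\emptyset$-operations on meanings. Once that compositionality is in hand, the rest is a short squeeze between $X$ and $\norm{\phi_\emptyset}$ using the inclusion of trumps and cotrumps given by perfection. No new combinatorial content is needed beyond the two propositions already established.
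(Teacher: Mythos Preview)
Your argument is correct and mirrors the paper's proof almost exactly: represent the perfect element as $\norm{\phi}=\tuple{\powerset(V),\powerset(\^NA\setminus V)}$, note that $\norm{\phi_\emptyset}$ is perfect and lies in $\Cyls_{\mathrm{IFG}_{N,\emptyset}}(\A)$, and then use \propref{perfection preserves strategies} on both the trump and cotrump sides to conclude $V=V'$. The only difference is cosmetic---you spell out the compositionality step placing $\norm{\phi_\emptyset}$ in $\Cyls_{\mathrm{IFG}_{N,\emptyset}}(\A)$, whereas the paper simply invokes \corref{perfect subalgebra}.
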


\begin{proof}
Let \(X = \norm{\phi} = \tuple{\powerset(V), \powerset(\^NA \setminus V)}\), and consider \(\norm{\phi_\emptyset}\). We know $\norm{\phi_\emptyset}$ is perfect, so let \(\norm{\phi_\emptyset}= \tuple{\powerset(V_\emptyset), \powerset(\^NA \setminus V_\emptyset)}\). It suffices to show that \(V = V_\emptyset\). By \propref{perfection preserves strategies}, \(\A \modelt \phi[V]\) implies \(\A \modelt \phi_\emptyset[V]\), so \(V \subseteq V_\emptyset\). Conversely, \(\A \modelf \phi[\^NA \setminus V]\) implies \(\A \modelf \phi_\emptyset[\^NA \setminus V]\), so \(\^NA \setminus V \subseteq \^NA \setminus V_\emptyset\). Hence \(V = V_\emptyset\).
\end{proof}

\begin{theorem} 
\label{Cs(A) isomorphic}
\(\Cyls_N(\A) \iso \Cyls_{\mathrm{IFG}_{N,\emptyset}}(\A)\).
\end{theorem}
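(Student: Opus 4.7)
The natural candidate for the isomorphism is the map
\[
h\colon \Cyls_N(\A) \to \Cyls_{\mathrm{IFG}_{N,\emptyset}}(\A), \qquad h(V) = \tuple{\powerset(V),\, \powerset(\^NA \setminus V)}.
\]
The plan is to show $h$ is well-defined, injective, surjective, and preserves the signature of the $\emptyset$-reduct. Injectivity is immediate from the definition: $V$ is recoverable as the unique maximum element of $h(V)^+$. To see that $h$ lands in $\Cyls_{\mathrm{IFG}_{N,\emptyset}}(\A)$, note that every element of $\Cyls_N(\A)$ has the form $\phi^\A$ for some first-order formula $\phi$. Viewing $\phi$ as a perfect IFG-formula, \thmref{conservative extension} gives $\trump{\phi}_\A = \powerset(\phi^\A)$ and $\cotrump{\phi}_\A = \powerset(\^NA \setminus \phi^\A)$, so $h(\phi^\A) = \norm{\phi}_\A$, and this is (by definition) a generator of $\Cyls_{\mathrm{IFG}_{N,\emptyset}}(\A)$ when $\phi$ is atomic.

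Next I would verify that $h$ is a homomorphism of the $\emptyset$-reduct signature $\seq{0, 1, D_{ij}, \n{}, +_\emptyset, \cdot_\emptyset, C_{n,\emptyset}}$. For the constants this is a direct unpacking: $h(\emptyset) = \tuple{\set{\emptyset}, \powerset(\^NA)} = 0$, $h(\^NA) = \tuple{\powerset(\^NA), \set{\emptyset}} = 1$, and $h(\setof{\vec a}{a_i=a_j}) = D_{ij}$. For negation, $h(\^NA \setminus V) = \tuple{\powerset(\^NA \setminus V), \powerset(V)} = \n{h(V)}$. For $+_\emptyset$ and $C_{n,\emptyset}$, the preceding lemma gives exactly the identities needed: $h(V \cup W) = h(V) +_\emptyset h(W)$ and, since the classical cylindrification of $V$ on the coordinate $n$ is precisely $V(n:A)$, $h(C_n V) = C_{n,\emptyset}(h(V))$. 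The operation $\cdot_\emptyset$ is definable in terms of $\n{}$ and $+_\emptyset$, so compatibility follows automatically.

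For surjectivity I would argue as follows. The image $h(\Cyls_N(\A))$ is a subalgebra of the $\emptyset$-reduct (since $h$ is a homomorphism) and it contains $\norm{\phi}_\A$ for every atomic $\phi$ (since $h(\phi^\A) = \norm{\phi}_\A$ by \thmref{conservative extension}). On the other hand, $\Cyls_{\mathrm{IFG}_{N,\emptyset}}(\A)$ is by definition the subalgebra of the $\emptyset$-reduct generated by precisely those atomic meanings, so $\Cyls_{\mathrm{IFG}_{N,\emptyset}}(\A) \subseteq h(\Cyls_N(\A))$. The reverse containment is automatic: every element of $\Cyls_N(\A)$ is built from atomic $\phi^\A$ using complement, union, and cylindrification, and $h$ carries those operations to the corresponding $\emptyset$-operations, landing back in $\Cyls_{\mathrm{IFG}_{N,\emptyset}}(\A)$.

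The only conceptual subtlety is making sure that the definition of $h$ on all of $\Cyls_N(\A)$ (not just on formula-meanings one at a time) is correctly a restriction of the map $V \mapsto \tuple{\powerset(V), \powerset(\^NA\setminus V)}$, and that this global map respects the operations on arbitrary sets rather than only on definable ones; this is exactly the content of the preceding lemma, which was proved for arbitrary $V$ and $W$. I expect no serious obstacle — once the lemma is in hand the verification is a line-by-line check of the seven operations of the $\emptyset$-reduct signature.
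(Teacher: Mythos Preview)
Your proposal is correct and follows essentially the same route as the paper: the same map $V \mapsto \tuple{\powerset(V),\,\powerset(\^NA\setminus V)}$, the same use of the preceding lemma for the homomorphism check, and the same reliance on \thmref{conservative extension} to identify $h(\phi^\A)$ with $\norm{\phi}_\A$. The only cosmetic difference is in establishing bijectivity: the paper writes down the explicit inverse $G(X) = \bigcup X^+$ and verifies both composites are the identity (tacitly using \corref{perfect subalgebra} for $F\circ G = \id$), whereas you argue injectivity directly and obtain surjectivity from the fact that the image is a subalgebra of the $\emptyset$-reduct containing the atomic generators.
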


\begin{proof}
Define two functions \(F\colon \Cyls_N(\A) \to \Cyls_{\mathrm{IFG}_{N, \emptyset}}(\A)\) and \(G\colon \Cyls_{\mathrm{IFG}_{N, \emptyset}}(\A) \to \Cyls_N(\A)\) by 
\[
F(V) = \tuple{\powerset(V),\, \powerset(\^NA \setminus V)} \quad \text{and} \quad G(X) = \bigcup X^+,
\]
respectively.
Observe that 
\[
G \circ F(V) = \bigcup \powerset(V) = V,
\]
\[
F \circ G(X) = \tuple{\powerset\left(\bigcup X^+\right), \powerset\left(\^NA \setminus \bigcup X^+\right)} = X.
\]
Thus $F$ is bijective.

To show that $F$ is a homomorphism, observe that 
\begin{align*}
F(0) &= F(\emptyset) = \tuple{\powerset(\emptyset), \powerset(\^NA)} 
= 0, \\
F(1) &= F(\^NA) = \tuple{\powerset(\^NA), \powerset(\emptyset)} 
= 1, \\
F(D_{ij}) &= F(\setof{\vec a \in \^NA}{a_i = a_j}) \\
	 &= \tuple{\powerset(\setof{\vec a \in \^NA}{a_i = a_j}),\, \powerset(\setof{\vec a \in \^NA}{a_i \not= a_j})} \\
	 &= D_{ij}, \\
F(-V) &= F(\^NA \setminus V) = \tuple{\powerset(\^NA \setminus V),\, \powerset(V)} = \n{F(V)}, \\
F(V + W) &= F(V \cup W) = \tuple{\powerset(V \cup W),\, \powerset(\^NA \setminus (V \cup W)} = F(V) +_\emptyset F(W), \\
F(C_n(V)) &= F(V(n:A)) = \tuple{\powerset(V(n:A)),\, \powerset(\^NA \setminus V(n:A))} = C_{n,\emptyset}(F(V)).
\end{align*}
Therefore $F$ is an isomorphism.
\end{proof}


\section{Conclusion}

An IFG-formula $\phi$ has the same meaning in $\A$ as an ordinary first-order formula if and only if $\norm{\phi}_\A$ is perfect. The ordinary cylindric set algebra $\Cyls_N(\A)$ is isomorphic to the subalgebra of the $\emptyset$-reduct of $\Cyls_{\mathrm{IFG}_{N}}(\A)$ consisting of all perfect elements.

	
\bibliographystyle{plain}
\bibliography{math}

\begin{thebibliography}{1}

\bibitem{Dechesne:2005}
Francien Dechesne.
\newblock {\em Game, Set, Maths: Formal investigations into logic with
  imperfect information}.
\newblock PhD thesis, Universiteit van Tilburg, March 2005.

\bibitem{Hintikka:1996}
Jaakko Hintikka.
\newblock {\em The Principles of Mathematics Revisited}.
\newblock Cambridge University Press, 1996.

\bibitem{Hintikka:1989}
Jaakko Hintikka and Gabriel Sandu.
\newblock Informational independence as a semantical phenomenon.
\newblock In Jens~Erik Fenstad et~al., editors, {\em Logic, Methodology and
  Philosophy of Science VIII}, volume 126 of {\em Studies in Logic and the
  Foundations of Mathematics}, pages 571--589. North-Holland, 1989.

\bibitem{Hodges:1997a}
Wilfrid Hodges.
\newblock Compositional semantics for a language of imperfect information.
\newblock {\em Logic Journal of the IGPL}, 5(4):539--563, 1997.

\bibitem{Hodges:1997b}
Wilfrid Hodges.
\newblock Some strange quantifiers.
\newblock In Jan Mycielski, Grzegorz Rozenberg, and Arto Salomaa, editors, {\em
  Structures in Logic and Computer Science: A Selection of Essays in Honor of
  A. Ehrenfeucht}, number 1261 in Lecture Notes in Computer Science, pages
  51--65. Springer, 1997.

\bibitem{Mann:2008}
Allen~L. Mann.
\newblock Independence-friendly cylindric set algebras.
\newblock arXiv:0711.4376.

\bibitem{Mann:2007}
Allen~L. Mann.
\newblock {\em Independence-Friendly Cylindric Set Algebras}.
\newblock PhD thesis, University of Colorado at Boulder, 2007.

\end{thebibliography}

\end{document}